\documentclass[11pt,a4paper]{article}
\usepackage[margin=2.5cm]{geometry}
\usepackage{amsmath,amssymb,amsthm}
\usepackage{mathtools}
\usepackage{dsfont}
\usepackage{booktabs}
\usepackage{array}
\usepackage{microtype}
\usepackage{float}
\usepackage{graphicx}
\usepackage[table]{xcolor}
\usepackage{siunitx}
\usepackage{pifont}
\usepackage{algorithm}
\usepackage{algpseudocode}
\usepackage[colorlinks=true,linkcolor=blue!50!black,citecolor=blue!50!black,urlcolor=blue!50!black]{hyperref}
\usepackage{enumitem}

\newcommand{\R}{\mathbb{R}}
\newcommand{\N}{\mathbb{N}}
\newcommand{\inner}[2]{\langle #1,\,#2\rangle}
\newcommand{\SW}{\mathrm{SW}}

\DeclareMathOperator*{\argmax}{arg\,max}

\newcommand{\Fnorm}[1]{\lVert #1\rVert_F}
\newcommand{\lamstar}{\lambda^{\star}}
\newcommand{\Szero}{\mathcal{S}_0}
\newcommand{\Sone}{\mathcal{S}_1}
\newcommand{\Stwo}{\mathcal{S}_2}
\newcommand{\hgl}[1]{\cellcolor{blue!9}#1}

\newtheorem{theorem}{Theorem}[section]
\newtheorem{proposition}[theorem]{Proposition}
\newtheorem{lemma}[theorem]{Lemma}

\theoremstyle{definition}
\newtheorem{definition}[theorem]{Definition}
\theoremstyle{remark}

\title{\bfseries Topological Clustering via Sliced Wasserstein Kernels}
\author{Vikram Aithal$^{1}$, Ajit Kumar$^{1}$, Ambika Sharma$^{2}$\\[4pt]
\normalsize $^{1}$Department of Mathematics, Institute of Chemical Technology, Mumbai\\
\normalsize $^{2}$Department of Mathematics, Bhavan's Hazarimal Somani College, Mumbai\\
\normalsize \texttt{vt.aithal@ictmumbai.edu.in, a.kumar@ictmumbai.edu.in,}\\
\normalsize \texttt{ambikasharma@bhavanschowpatty.ac.in}}
\date{}

\begin{document}
\maketitle

\begin{abstract}
\noindent
Topological data analysis (TDA) uses topological techniques to extract
meaningful shape-based information from complex datasets. Clustering is a
central problem in data analysis, and there has been considerable recent
interest in understanding how TDA can inform it. Existing approaches either
cluster persistence diagrams directly under Wasserstein-type distances, which
is computationally expensive, or use vector representations of diagrams. We propose a kernel $k$-means algorithm built on
a convex combination of sliced Wasserstein (SW) kernels, one for each homology under consideration. Unlike other vector representations of persistence diagrams, the SW kernel is both stable
and discriminative with respect to the $1$-Wasserstein distance. The
method outperforms the baselines on two benchmark datasets and remains competitive on a third synthetic dataset, while being computationally efficient. It also outperforms both a single SW kernel on the union
of all homology groups and an SW kernel
computed directly on the point clouds. The convex
combination assigns an interpretable weight to each $q$-homology kernel. We further validate that the weights identify the discriminating homology.
\medskip

\noindent\textbf{Keywords:} topological data analysis, persistent homology,
persistence diagrams, sliced Wasserstein kernel, kernel $k$-means.
\end{abstract}

\section{Introduction}
\label{sec:intro}

Topological data analysis (TDA) provides a framework for capturing the multi-scale structure of data using tools from algebraic topology with persistent homology as its main tool.
Its origins lie in the size functions of Frosini~\cite{frosini1992measuring},
and the field took its modern form with the introduction of persistent
homology by Edelsbrunner et al.~\cite{edelsbrunner2002topological} and
Zomorodian and Carlsson~\cite{zomorodian2005computing}. Persistent homology
records the topological features of a dataset across all scales and summarises as persistence diagrams: multisets of points in the
plane whose coordinates are the birth and death scales of the
features~\cite{ghrist2008barcodes}. It applies to very general data,
including point clouds, functions, and networks, and has been used
successfully in sensor network coverage~\cite{desilva2007coverage}, protein
and DNA structure analysis~\cite{gameiro2015protein, emmett2015chromatin},
materials science~\cite{hiraoka2016amorphous}, robotics and path
planning~\cite{bhattacharya2015persistent}, financial time
series~\cite{ismail2020bitcoin}, and many other
domains~\cite{otter2017roadmap}. 

Clustering is a fundamental problem in
machine learning, and there has been growing interest in whether the
topological information captured by persistence diagrams can drive
clustering~\cite{islambekov2019spacetime, panagopoulos2022clustering}. The
obstacle is that persistence diagrams do not form a linear space, so classical clustering algorithms cannot be applied directly.

Two lines of prior work are the starting point for this paper. Marchese et
al.~\cite{marchese2017kmeans} perform $k$-means directly in the space of persistence diagrams, using Wasserstein distances and Fréchet means as cluster centroids. Cao et al.~\cite{cao2025kmeans} compare
$k$-means across several representations of persistence diagrams,
including persistence landscapes, persistence images, persistence
measures, and the diagrams themselves. Both approaches yield mixed
results across datasets and representations. 

The space of persistence diagrams is a metric
space, not a Hilbert space. The standard $k$-means update, which computes
centroids as arithmetic means, is therefore unavailable, and clustering
directly in diagram space requires replacing means with Fr\'echet means, which is expensive because every iteration recomputes Fr\'echet means, each itself an optimisation over
matchings. 

The alternative is to embed diagrams into a Hilbert space,
where the standard update applies. There are two ways to do this. One
maps diagrams into a Hilbert space explicitly through a vectorisation; the other defines a positive-definite kernel on diagrams
and works implicitly in the induced reproducing kernel Hilbert space (RKHS). Instead of computing centroids in diagram space or constructing an explicit vector embedding, we define a positive definite kernel directly on persistence diagrams; the kernel implicitly embeds diagrams into a RKHS, and clustering proceeds through kernel evaluations alone. This removes the cost of Fr\'echet means.
Several positive definite kernels have been proposed for persistence diagrams. The persistence scale-space kernel (PSSK) of Reininghaus et al.~\cite{reininghaus2015stable} is built from a heat-diffusion feature map and is proven stable with respect to the $1$-Wasserstein distance. The persistence weighted Gaussian kernel (PWGK) of Kusano et al.~\cite{kusano2016pwg} embeds each diagram as a weighted measure in an RKHS and is proven stable with respect to both the $1$-Wasserstein and the bottleneck distance. Both kernels, however, are established only to be stable. The sliced Wasserstein (SW) kernel of Carri\`ere et al.~\cite{carriere2017sliced} is proven to be both stable and discriminative with respect to the $1$-Wasserstein distance, so its RKHS distance follows the geometry of the diagram distance rather than merely being bounded by it. Carri\`ere et al.~\cite{carriere2017sliced} further show experimentally that it outperforms both PSSK and PWGK on several classification benchmarks. These properties make it the natural building block for our method. 

A point cloud produces one diagram for each $q$-homology considered. Any clustering method based on persistence diagrams must therefore decide how to combine information across all $q$-homologies. Convex combinations of base kernels are the most widely studied family in kernel learning~\cite{cortes2012alignment}.

Our contributions are as follows.
\begin{enumerate}[itemsep=2pt]
  \item We formulate kernel $k$-means with a convex
        combination of SW kernels, one for each $q$-homology, prove that the kernel is positive definite, and its feature-space metric is stable and discriminative with respect to $1$-Wasserstein distance.
        (Section~\ref{sec:method}).
  \item The method outperforms the baselines on two benchmark datasets and remains competitive on a third synthetic dataset, while being computationally efficient (Section~\ref{sec:experiments}).
  \item We compare the convex combination against two natural alternatives and identify the limitation of each. A single kernel on the union of all homology groups cannot assign separate bandwidths to each $q$-homology kernel, so it cannot weight their relative discriminative contributions — the flexibility that the convex combination is designed to provide. An SW kernel computed directly on the point clouds clusters well when the clouds share a common orientation, but it is not rotation-invariant: accuracy degrades once the clouds are independently rotated or perturbed by noise. It also scales with the number of points in each cloud.
  \item The convex combination assigns a weight to each $q$-homology kernel, which raises a separate question: does the selected weight identify the discriminating homology? The benchmark datasets cannot answer this, for two reasons: the weights are selected by maximising clustering accuracy over a grid, and the accuracy-maximising weight need not concentrate on the discriminating homology and none of the benchmarks has a single homology known to separate the classes, so there is no reference against which the selected weights can be checked. We therefore construct synthetic datasets with a planted discriminating homology and combine them with a deterministic, alignment-based selection criterion, showing that the selected weight does concentrate on the discriminating homology kernel.(Section~\ref{sec:weightsig}).
\end{enumerate}

\section{Background and Preliminaries}
\label{sec:background}
Persistent homology summarizes the topology of data across scales through a
filtration. A filtration is a nested family of simplicial complexes $\{K_\varepsilon\}_{\varepsilon \in \R}$ indexed by a
scale parameter, with $K_\varepsilon \subseteq K_{\varepsilon'}$ whenever
$\varepsilon \le \varepsilon'$. Applying homology with coefficients in a field to a
filtration yields, for each dimension $q \ge 0$, a family of vector spaces
$H_q(K_\varepsilon)$ connected by the linear maps induced by the inclusions
$K_\varepsilon \hookrightarrow K_{\varepsilon'}$; such a family is called persistence
module. The dimension of $H_q(K_\varepsilon)$ counts the $q$-dimensional
features present at scale $\varepsilon$: connected components for $q=0$,
independent loops for $q=1$, enclosed voids for $q=2$.

By the structure theorem for persistence modules~\cite{chazal2016structure}, any finitely generated persistence module over a field can be decomposed uniquely into a direct sum of interval
modules. Each interval corresponds to a topological feature that is
born at some scale $b$ and dies at some scale $d > b$. The multiset of pairs
$(b,d)$ is the persistence diagram of the filtration in dimension $q$, denoted
$D_q$. Features with long lifespans $d-b$ are regarded as topological
signal, those with short lifespans as noise.

\subsection{Distance between Persistence Diagrams}
\label{sec:distances}

A persistence diagram $D$ is a multiset of points $\{(x,y)\mid x>0,y>0\}\setminus \Delta$, where $\Delta=\{(x,x)\mid x>0\}$ is called the diagonal.

\begin{definition}[Diagram distance]
\label{def:dp}
Let $p \in \N$ and let $D_1, D_2$ be two diagrams. A \emph{partial matching}
between $D_1$ and $D_2$ is a bijection $\gamma : D_1 \supseteq A \to B
\subseteq D_2$ between a subset $A$ of $D_1$ and a subset $B$ of
$D_2$. For $x \in A$, the cost of matching is defined as
\[
  c(x) \;:=\; \lVert x - \gamma(x) \rVert_\infty^{\,p}.
\]
For $y \in (D_1 \cup D_2) \setminus (A \cup B)$, the cost of matching is defined as 
\[
  c'(y) \;:=\; \lVert y - \pi_\Delta(y) \rVert_\infty^{\,p},
\]
where $\pi_\Delta$ denotes the orthogonal projection onto $\Delta$. The cost $c(\gamma)$ is defined as
\[
  c(\gamma) \;:=\; \Bigl(\, \sum_{x} c(x)
  \;+ \sum_{y} c'(y) \Bigr)^{1/p},
\]
and the \emph{$p$th diagram distance} is defined as
\[
  d_p(D_1, D_2) \;:=\; \inf_\gamma\, c(\gamma).
\]
For $p = \infty$, the cost $c(\gamma)$ is defined as
\[
  c(\gamma) \;:=\; \max\Bigl\{ \sup_{x \in A} \lVert x - \gamma(x)
  \rVert_\infty,\; \sup_{y} \lVert y - \pi_\Delta(y) \rVert_\infty \Bigr\},
\]
and the distance $d_\infty(D_1,D_2) := \inf_\gamma c(\gamma)$ is
called the \emph{bottleneck distance}.
\end{definition}
\subsection{Kernel Methods and Gaussian Kernel}
\label{sec:kernels}

\begin{definition}[Positive definite kernel]
Given a set $\mathcal{X}$, a symmetric function
$k : \mathcal{X}\times\mathcal{X} \to \R$ is a \emph{positive definite kernel}
if for all $n \in \N$, all $x_1,\dots,x_n \in \mathcal{X}$ and all
$a_1,\dots,a_n \in \R$,
$\sum_{i,j=1}^{n} a_i a_j\, k(x_i, x_j) \ge 0$.
\end{definition}

Given a positive definite kernel $k$ on $\mathcal{X}$ there exist a reproducing
kernel Hilbert space (RKHS) $\mathcal{H}_k$ and a feature map
$\phi : \mathcal{X} \to \mathcal{H}_k$ with
$k(x,y) = \inner{\phi(x)}{\phi(y)}_{\mathcal{H}_k}$~\cite{aronszajn1950theory}. The kernel induces a
metric $d_k$ on $\mathcal{X}$, computable from kernel evaluations alone:
\begin{equation}
  d_k^2(x,y) \;=\; k(x,x) - 2k(x,y) + k(y,y).
  \label{eq:dk}
\end{equation}

\begin{definition}[Conditionally negative definite function]
A symmetric function $f : \mathcal{X}\times\mathcal{X} \to \R$ is
\emph{conditionally negative definite} (CND) if for all $n \in \N$, all
$x_1,\dots,x_n \in \mathcal{X}$ and all $a_1,\dots,a_n \in \R$ with
$\sum_i a_i = 0$,
$\sum_{i,j=1}^{n} a_i a_j\, f(x_i, x_j) \le 0$.
\end{definition}

\begin{definition}[Gaussian kernel]
\label{def:rbf}
Let $f : \mathcal{X}\times\mathcal{X} \to [0,\infty)$ be conditionally negative
definite. For $\sigma > 0$, the Gaussian kernel associated with $f$ is
\[
  k_\sigma(x,y) \;=\; \exp\Bigl(-\frac{f(x,y)}{2\sigma^2}\Bigr).
\]
By~\cite[Theorem~3.2.2]{berg1984harmonic}, $k_\sigma$ is positive definite for
every $\sigma > 0$ if and only if $f$ is conditionally negative definite.
\end{definition}

The CND requirement is the obstruction to building Gaussian kernels directly
from diagram distances. Reininghaus et
al.~\cite[Appendix~A]{reininghaus2014stable} observe experimentally that $d_p$
fails to be CND for several values of $p$, including $p=\infty$. 

In this article we use the Sliced Wasserstein distance, introduced by Rabin et al.~\cite{rabin2011wasserstein} and adapted to persistence diagrams by Carrière et al.~\cite{carriere2017sliced}, an approximation of $d_1$ that circumvents this obstruction by reducing the comparison of planar diagrams to a family of one-dimensional comparisons. The construction, given in the next section, rests
on a single classical fact from optimal transport: on the real line, the
$1$-Wasserstein distance between measures coincides with an $L^1$ distance, and it is therefore conditionally negative
definite~\cite[Lemma~3.2]{carriere2017sliced}.

Following~\cite{carriere2017sliced}, we work with non-negative, not
necessarily normalised, measures on $\R$ of equal total mass. For such measures $\mu, \nu$ with $\mu(\R) = \nu(\R) = r$, the
$1$-Wasserstein distance $\mathcal{W}_1(\mu,\nu)$ admits an $L^1$
representation on the real line. For
probability measures this is
classical~\cite[Prop.~2.17]{santambrogio2015optimal}; the extension to
unnormalised measures of equal mass is established
in~\cite[Prop.~2.1]{carriere2017sliced}. This $L^1$ form is precisely
what makes $\mathcal{W}_1$ conditionally negative definite, and hence a
valid basis for a positive-definite kernel. 

A good kernel on persistence diagrams should be both stable and discriminative:diagrams close in diagram distance should map to similar representations, while the diagrams that are far apart should map to distinguishable ones. 

\subsection{The Sliced Wasserstein Kernel}
\label{sec:swkernel}

\begin{definition}[Sliced Wasserstein distance, \cite{carriere2017sliced}]
\label{def:sw}
Let $\mathcal{D}$ denote the space of persistence diagrams with at most
countably many points, and $\mathcal{D}^b_f \subset \mathcal{D}$ the subspace
of finite, bounded diagrams. For
$\theta \in \R^2$ with $\|\theta\|_2 = 1$, let
$\pi_\theta : \R^2 \to \R$, $\pi_\theta(p) = \inner{p}{\theta}$, be the
orthogonal projection onto the line spanned by $\theta$, and let
$\pi_\Delta : \R^2 \to \R^2$ be the orthogonal projection onto the diagonal
$\Delta$. Let $D_1, D_2 \in \mathcal{D}^b_f$. For $i=1,2$, define
\[
  \mu_i^\theta = \sum_{p \in D_i} \delta_{\pi_\theta(p)},
  \qquad
  \mu_{i\Delta}^\theta = \sum_{p \in D_i} \delta_{\pi_\theta(\pi_\Delta(p))}.
\]
The sliced Wasserstein distance~\cite{carriere2017sliced} between $D_1$ and $D_2$ is
\begin{equation}
  \SW(D_1, D_2) \;=\; \frac{1}{\pi} \int_{-\pi/2}^{\pi/2}
  \mathcal{W}_1\bigl(\mu_1^\theta + \mu_{2\Delta}^\theta,\;
           \mu_2^\theta + \mu_{1\Delta}^\theta\bigr)\, d\theta .
  \label{eq:sw}
\end{equation}
\end{definition}

Each integrand in \eqref{eq:sw} is a one-dimensional $\mathcal{W}_1$ distance, and
$\SW$ is conditionally negative definite~\cite[Lemma~3.2]{carriere2017sliced},
so Definition~\ref{def:rbf} applies.

\begin{definition}[Sliced Wasserstein kernel]
\label{def:swkernel}
Let $\sigma > 0$ be given. The SW kernel $k_{\SW}:\mathcal{D}^b_f\times \mathcal{D}^b_f\to \R$ is defined as
\begin{equation}
  k_{\SW}(D_1, D_2) \;=\; \exp\Bigl(-\frac{\SW(D_1,D_2)}{2\sigma^2}\Bigr).
  \label{eq:swkernel}
\end{equation}
\end{definition}

Carri\`ere et al.~\cite{carriere2017sliced} prove that on diagrams with at most
$N$ points, $\SW$ is equivalent to the first diagram distance $d_1$;
\begin{equation}
    \frac{1}{2M}\, d_1(D_1,D_2) \;\le\; \SW(D_1,D_2) \;\le\; 2\sqrt{2}\,
  d_1(D_1,D_2), \qquad M = 1 + 2N(2N-1).
  \label{eq:swequiv}
\end{equation}

The stability of $k_{\SW}$ is established with respect to the diagram distance
$d_1$. The persistence scale-space kernel of Reininghaus et
al.~\cite{reininghaus2015stable} is stable with respect to the $1$-Wasserstein
distance, but they show that a kernel which is additive,
$k(D_1 \cup D_2, D_3) = k(D_1, D_3) + k(D_2, D_3)$ for all
$D_1, D_2, D_3 \in \mathcal{D}$, and non-trivial, i.e.\ $k(D_1, D_2) \neq 0$ for some $D_1, D_2 \in \mathcal{D}$, cannot be stable with respect to the $p$-Wasserstein distance for any $p > 1$, in particular against $d_\infty$.
The persistence weighted Gaussian kernel of Kusano et al.~\cite{kusano2016pwg}
attains stability with respect to $d_\infty$, and moreover, by composing with the bound
$d_\infty(D_q(X), D_q(Y)) \le d_H(X, Y)$ on the Hausdorff distance, data-level
stability. In the discriminativity direction,
however, neither kernel admits a lower bound of the above form, so to our
knowledge $k_{\SW}$ is the only kernel for persistence diagrams proven to be both stable and discriminative.

\subsection{Kernel $k$-means}
\label{sec:kkm}

Classical $k$-means partitions a dataset $\{x_1,\dots,x_N\} \subset \mathcal{X}$
into $C$ clusters $\{\pi_1,\dots,\pi_K\}$ by minimising
\[
  \mathcal{D}(\{\pi_j\}) = \sum_{j=1}^{K} \sum_{x \in \pi_j} \|x - c_j\|^2,
  \qquad
  c_j = \frac{1}{|\pi_j|} \sum_{x \in \pi_j} x .
\]
Kernel $k$-means~\cite{girolami2002mercer,dhillon2004kernel} replaces each $x$ by its image $\phi(x) \in \mathcal{H}_k$ under the feature map of a kernel $k$; the objective becomes
\begin{equation}
  \mathcal{D}_k(\{\pi_j\}) = \sum_{j=1}^{K} \sum_{x \in \pi_j}
  \|\phi(x) - m_j\|^2_{\mathcal{H}_k},
  \qquad
  m_j = \frac{1}{|\pi_j|} \sum_{x \in \pi_j} \phi(x).
  \label{eq:kkm}
\end{equation}
The centroid $m_j$ lives in $\mathcal{H}_k$ and generally has no pre-image in
$\mathcal{X}$, but distances to it require only kernel evaluations:

\begin{proposition}[Distance in feature space]
\label{prop:dist}
For any $x_n \in \mathcal{X}$ and cluster $\pi_j$ with centroid
$m_j = \frac{1}{|\pi_j|}\sum_{m \in \pi_j} \phi(x_m)$,
\begin{equation}
  \|\phi(x_n) - m_j\|^2_{\mathcal{H}_k}
  = k(x_n,x_n)
  - \frac{2}{|\pi_j|} \sum_{m \in \pi_j} k(x_n, x_m)
  + \frac{1}{|\pi_j|^2} \sum_{m,\ell \in \pi_j} k(x_m, x_\ell).
  \label{eq:distformula}
\end{equation}
\end{proposition}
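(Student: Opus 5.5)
The plan is to expand the squared RKHS norm by bilinearity of the inner product in $\mathcal{H}_k$. Then we replace every inner product of feature vectors by a kernel evaluation, using the reproducing identity $k(x,y)=\inner{\phi(x)}{\phi(y)}_{\mathcal{H}_k}$ recalled after the definition of a positive definite kernel.

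Concretely, we first write
\[
\|\phi(x_n)-m_j\|^2_{\mathcal{H}_k}
=\inner{\phi(x_n)}{\phi(x_n)}_{\mathcal{H}_k}
-2\inner{\phi(x_n)}{m_j}_{\mathcal{H}_k}
+\inner{m_j}{m_j}_{\mathcal{H}_k}.
\]
This uses only that $\mathcal{H}_k$ is a real inner product space, so the inner product is symmetric. We then treat the three terms separately.

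\begin{enumerate}
  \item The first term is $k(x_n,x_n)$.
  \item For the second term, substitute $m_j=\frac{1}{|\pi_j|}\sum_{m\in\pi_j}\phi(x_m)$ and use linearity in the second argument. This gives $\frac{1}{|\pi_j|}\sum_{m\in\pi_j}k(x_n,x_m)$.
  \item For the third term, expand both arguments and use bilinearity over the double sum. This gives $\frac{1}{|\pi_j|^2}\sum_{m,\ell\in\pi_j}k(x_m,x_\ell)$.
\end{enumerate}

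Collecting the three terms yields \eqref{eq:distformula}.

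There is no real obstacle. The only point needing care is that $m_j$ is a well-defined element of $\mathcal{H}_k$. It is, because it is a finite linear combination of feature vectors and the cluster is assumed nonempty, so $|\pi_j|\ge 1$. This is what justifies pulling the finite sums out of the inner product. We will also note that the final expression is independent of the particular choice of feature map and RKHS, since it involves only $k$. That is the sense in which kernel $k$-means never needs a pre-image of $m_j$.
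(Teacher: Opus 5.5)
Your proposal is correct and matches the paper's proof: expand the squared norm, substitute $m_j=\frac{1}{|\pi_j|}\sum_{m\in\pi_j}\phi(x_m)$, and use bilinearity together with $k(x,y)=\inner{\phi(x)}{\phi(y)}_{\mathcal{H}_k}$ on each term. Your remark that the cluster must be nonempty for $m_j$ to be defined is a point the paper leaves implicit.
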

\begin{proof}
Expanding the squared norm and applying the reproducing property
$\langle \phi(x), \phi(y) \rangle_{\mathcal{H}_k} = k(x,y)$ termwise,
\begin{align*}
  \|\phi(x_n) - m_j\|_{\mathcal{H}_k}^2
  &= \langle \phi(x_n), \phi(x_n) \rangle
     - 2\langle \phi(x_n), m_j \rangle
     + \langle m_j, m_j \rangle \\
  &= k(x_n,x_n)
     - \frac{2}{|\pi_j|} \sum_{m \in \pi_j} k(x_n, x_m)
     + \frac{1}{|\pi_j|^2} \sum_{m,\ell \in \pi_j} k(x_m, x_\ell),
\end{align*}
where the last two terms follow by substituting
$m_j = \tfrac{1}{|\pi_j|}\sum_{m\in\pi_j}\phi(x_m)$ and using bilinearity.
\end{proof}
Equation~\eqref{eq:distformula} expresses every point-to-centroid distance purely
in terms of entries of the Gram matrix $K$, so the centroids $m_j$ never need to
be represented explicitly in $\mathcal{H}_k$. This yields a Lloyd-style iteration
that alternates between assigning each point to its nearest centroid and updating
the clusters, with all computation carried out through kernel evaluations
(Algorithm~\ref{alg:kkm}).
\begin{algorithm}[H]
\caption{Kernel $k$-means}
\label{alg:kkm}
\begin{algorithmic}[1]
\Require Data $\{x_1,\dots,x_N\}$, kernel $k$, number of clusters $C$.
\State Compute the Gram matrix $K \in \R^{N \times N}$, $K_{nm} = k(x_n,x_m)$.
\State Initialise clusters $\{\pi_j^{(0)}\}_{j=1}^C$ via $k$-means++.
\Repeat
  \For{each point $x_n$ and each cluster $j$}
    \State Compute $d^2_{jn}$ by \eqref{eq:distformula}.
  \EndFor
  \State Reassign each $x_n$ to $j^\star(x_n) = \arg\min_j d^2_{jn}$.
  \State Update clusters $\pi_j^{(t+1)} = \{x_n : j^\star(x_n) = j\}$.
\Until{cluster assignments stop changing.}
\State \Return final partition $\{\pi_j\}$.
\end{algorithmic}
\end{algorithm}

\section{Sliced Wasserstein Kernel $k$-means}
\label{sec:method}
For each homology $q \in \{0,1,\dots,Q\}$, let
$D^{(q)}_n$ denote the restriction of the persistence diagram $D_n$ to homology $q$, i.e.\ the multiset of birth--death points arising
from the $q$-th homology group $H_q$. Let $k_{H_q}$ denote the sliced Wasserstein kernel of
Definition~\ref{def:swkernel} applied to homology $q$ diagrams:
\[
  k_{H_q}(D_n, D_m) =
  \exp\Bigl(-\frac{\SW\bigl(D^{(q)}_n, D^{(q)}_m\bigr)}{2\sigma_q^2}\Bigr),
\]
where $\sigma_q > 0$ is a bandwidth chosen separately for each homology.
Each $k_{H_q}$ measures topological similarity restricted to a single
homology $q$.
\begin{definition}[Convex combination of SW kernels]
\label{def:convex}
For $Q \in \N\cup \{0\}$ 
\[
  \mathcal{D}_b^f(Q) \;=\;
  \bigl\{\, D \in \mathcal{D}_b^f \;\big|\; H_k(D) = \{0\}\ \text{for all } k > Q \,\bigr\},
\] and let
$\lambda = (\lambda_0,\dots,\lambda_Q)$ with $\lambda_q \ge 0$ and
$\sum_{q=0}^{Q} \lambda_q = 1$. The convex combination of sliced
Wasserstein kernels is defined as
\begin{equation}
  k_\lambda(D_n, D_m) \;=\; \sum_{q=0}^{Q} \lambda_q\, k_{H_q}(D_n, D_m).
  \label{eq:convex}
\end{equation}
\end{definition}

\begin{proposition}[Positive definiteness]
\label{prop:pd}
The kernel $k_\lambda$ of \eqref{eq:convex} is positive definite.
\end{proposition}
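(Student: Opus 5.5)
The plan is to reduce the statement to positive definiteness of each summand $k_{H_q}$ and then use that the positive definite kernels form a convex cone.

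\emph{Step 1: each $k_{H_q}$ is a positive definite kernel on diagrams.} Define the restriction map $r_q : D \mapsto D^{(q)}$, which sends a diagram to a finite, bounded single-homology diagram in $\mathcal{D}^b_f$. Then
\[
  k_{H_q}(D_n,D_m) = k_{\SW}\bigl(r_q(D_n), r_q(D_m)\bigr),
\]
with bandwidth $\sigma_q$. The function $\SW$ is conditionally negative definite by \cite[Lemma~3.2]{carriere2017sliced}, and it is nonnegative and symmetric. Definition~\ref{def:rbf}, via \cite[Theorem~3.2.2]{berg1984harmonic}, therefore gives that $k_{\SW}$ is positive definite for every $\sigma_q>0$.

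Pulling a kernel back along any map preserves positive definiteness. Given diagrams $D_1,\dots,D_n$ and coefficients $a_i$, the quadratic form $\sum_{i,j} a_i a_j k_{H_q}(D_i,D_j)$ is exactly the quadratic form of $k_{\SW}$ evaluated at the points $r_q(D_i)$. Some of these points may coincide, but the definition of positive definiteness places no distinctness requirement on the points, so the form is $\ge 0$.

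\emph{Step 2: the convex combination is positive definite.} Symmetry of $k_\lambda$ follows from symmetry of each $k_{H_q}$, since $\SW$ is symmetric. For any $n$, diagrams $D_1,\dots,D_n$ and reals $a_1,\dots,a_n$, interchange the finite sums:
\[
  \sum_{i,j} a_i a_j k_\lambda(D_i,D_j) = \sum_{q=0}^{Q} \lambda_q \sum_{i,j} a_i a_j k_{H_q}(D_i,D_j).
\]
Each inner sum is $\ge 0$ by Step 1, and each $\lambda_q \ge 0$, so the total is nonnegative. Only nonnegativity of the weights is used; the normalisation $\sum_q \lambda_q = 1$ plays no role here.

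\emph{Main obstacle.} The only delicate point is Step 1: confirming that each restricted diagram $D^{(q)}$ genuinely lies in $\mathcal{D}^b_f$, where the CND property of $\SW$ is established. This holds because finite, bounded diagrams remain finite and bounded after restriction to one homology. A secondary bookkeeping point is that $k_{H_q}$ is a kernel on full diagrams rather than on $\mathcal{D}^b_f$ itself, which the pullback argument handles. Once these are in place, the rest is the standard closure of positive definite kernels under nonnegative linear combinations.
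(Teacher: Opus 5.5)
Your proposal is correct and follows the same route as the paper: you expand the quadratic form of $k_\lambda$ as $\sum_q \lambda_q$ times the quadratic form of $k_{H_q}$, then use $\lambda_q \ge 0$ together with positive definiteness of each $k_{H_q}$. Your Step 1, which combines conditional negative definiteness of $\SW$ with the Gaussian-kernel theorem and the pullback along $D \mapsto D^{(q)}$, spells out what the paper simply takes for granted from property (I1).
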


\begin{proof}
For scalars $a_1,\dots,a_N \in \R$ and diagrams $D_1,\dots,D_N$,
\[
  \sum_{n,m} a_n a_m\, k_\lambda(D_n,D_m)
  = \sum_{q=0}^{Q} \lambda_q \sum_{n,m} a_n a_m\, k_{H_q}(D_n,D_m) \;\ge\; 0,
\]
since each inner sum is non-negative ($k_{H_q}$ is positive definite) and each
$\lambda_q \ge 0$.
\end{proof}
\subsection{The Induced RKHS Distance}
\label{sec:rkhsdist}

We measure the difference between two persistence diagrams in two ways: by
$d_1(D,D')$, the $1$-Wasserstein diagram distance and by
$d_{k}(D,D')$, the RKHS metric induced by a kernel $k$, i.e.\ the distance in
the Hilbert space where kernel $k$-means runs. Stability and discriminativity ensure the two distances agree upto constant factors, so that clustering in the RKHS respects the original distances between diagrams. Recall from \eqref{eq:dk}
that any positive definite kernel $k$ has a feature map $\phi$ with
$d_k(x,y) = \|\phi(x) - \phi(y)\|_{\mathcal{H}_k}$, and that for a Gaussian kernel
$k(x,y)=\exp(-f(x,y)/2\sigma^2)$ one has $k(x,x)=e^0=1$, so
\begin{equation}
  d_k^2(x,y)=2\Bigl(1-\exp\!\bigl(-f(x,y)/2\sigma^2\bigr)\Bigr).
  \label{eq:rbfdist}
\end{equation}

The following properties are established in~\cite{carriere2017sliced}. For each $q$-homology kernel:
\begin{itemize}
  \item[(I1)] \textbf{$\SW$ is conditionally negative definite}, hence
        $k_{\SW}$ is a positive definite kernel for every $\sigma>0$.
  \item[(I2)] \textbf{Stability:} $\SW(D,D')\le 2\sqrt2\,d_1(D,D')$.
  \item[(I3)] \textbf{Discriminativity:} on diagrams with at most $N$ points,
        $\dfrac{1}{2M}\,d_1(D,D')\le \SW(D,D')$, where $M=1+2N(2N-1)$.
\end{itemize}

\begin{lemma}[Metric decomposition]
\label{lem:decomp}
Let $d_{k_\lambda}$ be the RKHS metric induced by $k_\lambda$ and, for each
homology $q$, let $d_{k_{H_q}}$ be the RKHS metric induced by $k_{H_q}$. Then for all diagrams $D_n,D_m$,
\begin{equation}
  d_{k_\lambda}^2(D_n,D_m)=\sum_{q=0}^{Q}\lambda_q\,d_{k_{H_q}}^2\big(D_n^{(q)},D_m^{(q)}\big).
  \label{eq:decomp}
\end{equation}
\end{lemma}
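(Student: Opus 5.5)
The plan is to compute both sides directly from the kernel-only formula \eqref{eq:dk}, using linearity of $k_\lambda$ in its summands. First, apply \eqref{eq:dk} to $k_\lambda$:
\[
d_{k_\lambda}^2(D_n,D_m)=k_\lambda(D_n,D_n)-2k_\lambda(D_n,D_m)+k_\lambda(D_m,D_m).
\]
This is legitimate because $k_\lambda$ is positive definite by Proposition~\ref{prop:pd}, so it has an RKHS and feature map, and the induced metric is given by \eqref{eq:dk}.

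Next, substitute Definition~\ref{def:convex}, $k_\lambda=\sum_{q=0}^{Q}\lambda_q k_{H_q}$, into each of the three terms. Then regroup the finite sum by $q$:
\[
d_{k_\lambda}^2(D_n,D_m)=\sum_{q=0}^{Q}\lambda_q\bigl(k_{H_q}(D_n,D_n)-2k_{H_q}(D_n,D_m)+k_{H_q}(D_m,D_m)\bigr).
\]
By definition, $k_{H_q}(D_n,D_m)$ depends on $D_n,D_m$ only through $D_n^{(q)},D_m^{(q)}$, being a Gaussian kernel of $\SW(D_n^{(q)},D_m^{(q)})$. Applying \eqref{eq:dk} once more, this time to the positive definite kernel $k_{H_q}$ (positive definite by (I1)), identifies each bracket with $d_{k_{H_q}}^2(D_n^{(q)},D_m^{(q)})$. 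That is exactly \eqref{eq:decomp}.

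If one wants an explicit form, \eqref{eq:rbfdist} gives each bracket as $2\bigl(1-\exp(-\SW(D_n^{(q)},D_m^{(q)})/2\sigma_q^2)\bigr)$. Here we use $\SW(D,D)=0$, so that $k_{H_q}(D,D)=1$.

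The main obstacle is not computational but notational. We must make precise that $d_{k_{H_q}}$ is regarded as a metric on the $q$-th diagrams, with $k_{H_q}$ viewed as a kernel on $\mathcal{D}^b_f$ through the restriction $D\mapsto D^{(q)}$. We must also check that the metric from \eqref{eq:dk} is intrinsic to the kernel, i.e.\ independent of the choice of feature map. The latter holds because \eqref{eq:dk} is expressed purely through kernel values. A structural alternative would be the feature map $\Phi=(\sqrt{\lambda_0}\phi_0,\dots,\sqrt{\lambda_Q}\phi_Q)$ into $\bigoplus_q\mathcal{H}_{k_{H_q}}$, which reproduces $k_\lambda$ and yields the same identity by Pythagoras. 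However, the kernel-evaluation route avoids any need to compare RKHSs.
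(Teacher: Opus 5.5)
Your proof is correct, but it takes a more elementary route than the paper. The paper builds the direct sum $\mathcal{H}_\lambda=\bigoplus_q\mathcal{H}_q$ with the weighted feature map $\Phi_\lambda=(\sqrt{\lambda_0}\,\phi_0,\dots,\sqrt{\lambda_Q}\,\phi_Q)$ and checks that $\inner{\Phi_\lambda(D_n)}{\Phi_\lambda(D_m)}=k_\lambda(D_n,D_m)$. It then reads off \eqref{eq:decomp} from the Pythagorean identity \eqref{eq:pythagoras} on the orthogonal components. This is exactly the ``structural alternative'' you mention at the end. You instead stay at the level of kernel values: \eqref{eq:dk} plus linearity of $k_\lambda$ in its summands gives the identity in two lines, with no Hilbert space to construct.

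Your route is shorter and logically leaner. The paper's step $d_{k_\lambda}^2=\|\Phi_\lambda(D_n)-\Phi_\lambda(D_m)\|^2_{\mathcal{H}_\lambda}$ tacitly uses your observation that the induced distance depends only on kernel values, not on the chosen feature map. That observation is needed because $(\mathcal{H}_\lambda,\Phi_\lambda)$ is just one feature-space representation of $k_\lambda$.

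What the paper's construction buys is an explicit geometric picture: each homology contributes an orthogonal block scaled by $\sqrt{\lambda_q}$. The paper uses this in the remark after the lemma to exhibit $(\mathcal{H}_\lambda,\Phi_\lambda)$ as a feature-space representation of $k_\lambda$.
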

\begin{proof}
Let $(\mathcal{H}_q,\phi_q)$ be the RKHS and feature map of $k_{H_q}$. Consider the space 
$\mathcal{H}_\lambda=\bigoplus\mathcal{H}_q$, in which a vector is a
tuple $(u_0,\dots,u_Q)$ with $u_q\in\mathcal{H}_q$, the components of different
dimensions are mutually orthogonal, and the squared length of a tuple is the
sum of the squared lengths of its components:
\begin{equation}
  \big\|(u_0,\dots,u_Q)\big\|_{\mathcal{H}_\lambda}^2
  =\sum_{q=0}^{Q}\|u_q\|_{\mathcal{H}_q}^2.
  \label{eq:pythagoras}
\end{equation}
Define the combined feature map
\begin{equation}
  \Phi_\lambda(D)=\Bigl(\sqrt{\lambda_0}\,\phi_0(D),\,\sqrt{\lambda_1}\,\phi_1(D),\,\dots,\,\sqrt{\lambda_Q}\,\phi_Q(D)\Bigr).
  \label{eq:featuremap}
\end{equation}
Then
\[
  \inner{\Phi_\lambda(D_n)}{\Phi_\lambda(D_m)}
  =\sum_{q=0}^{Q}\bigl(\sqrt{\lambda_q}\bigr)^2\inner{\phi_q(D_n)}{\phi_q(D_m)}
  =\sum_{q=0}^{Q}\lambda_q\,k_{H_q}(D_n,D_m)
  =k_\lambda(D_n,D_m),
\]
Hence $d_{k_\lambda}^2=\|\Phi_\lambda(D_n)-\Phi_\lambda(D_m)\|_{\mathcal{H}_\lambda}^2$, and by \eqref{eq:pythagoras},
\[
  d_{k_\lambda}^2
  =\sum_{q}\big\|\sqrt{\lambda_q}\big(\phi_q(D_n^{(q)})-\phi_q(D_m^{(q)})\big)\big\|_{\mathcal{H}_q}^2
  =\sum_{q}\lambda_q\big\|\phi_q(D_n^{(q)})-\phi_q(D_m^{(q)})\big\|_{\mathcal{H}_q}^2
  =\sum_{q}\lambda_q\,d_{k_{H_q}}^2\big(D_n^{(q)},D_m^{(q)}\big).
\]
\end{proof}
The construction shows more than~\eqref{eq:decomp}: the direct sum
$\mathcal{H}_\lambda=\bigoplus\mathcal{H}_q$, with inner product
$\inner{(u_q)_q}{(v_q)_q}_{\mathcal{H}_\lambda}=\sum_{q=0}^{Q}\inner{u_q}{v_q}_{\mathcal{H}_q}$,
is itself an RKHS, and $(\mathcal{H}_\lambda,\Phi_\lambda)$ is a
feature-space representation of
$k_\lambda=\sum_{q=0}^{Q}\lambda_q\,k_{H_q}$, since
$\inner{\Phi_\lambda(D_n)}{\Phi_\lambda(D_m)}_{\mathcal{H}_\lambda}
=k_\lambda(D_n,D_m)$.

\subsection{Stability and Disriminativity of the Convex Combination}
\label{sec:stability}
We now give the main theoretical result of this article, which states that the  $d_{k_\lambda}$ preserves the metric between persistence diagrams upto constants, which should lead to faithful clustering. This is confirmed in Section~\ref{sec:experiments}, where we report improved clustering accuracies on benchmark datasets.
\begin{theorem}[Stability of $k_\lambda$]
\label{thm:stab}
Let $\sigma_{\min}=\min_q\sigma_q>0$. Then
\[
  d_{k_\lambda}^2(D_n,D_m)\ \le\ \frac{2\sqrt2}{\sigma_{\min}^2}\sum_{q=0}^{Q}\lambda_q\,d_1^{(q)}\big(D_n^{(q)},D_m^{(q)}\big).
\]
If the diagrams are close in the true distance $d_1$ in every
dimension, they are close in the combined RKHS.
\end{theorem}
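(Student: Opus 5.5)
The plan is to reduce the statement to a per-homology estimate via Lemma~\ref{lem:decomp} and then combine. By \eqref{eq:decomp},
\[
  d_{k_\lambda}^2(D_n,D_m)=\sum_{q=0}^{Q}\lambda_q\,d_{k_{H_q}}^2\big(D_n^{(q)},D_m^{(q)}\big),
\]
so, since every $\lambda_q\ge 0$, it suffices to show for each fixed $q$ that
\[
  d_{k_{H_q}}^2\big(D_n^{(q)},D_m^{(q)}\big)\le \frac{2\sqrt2}{\sigma_{\min}^2}\,d_1^{(q)}\big(D_n^{(q)},D_m^{(q)}\big).
\]
Summing these inequalities with the weights $\lambda_q$ then yields the theorem directly.

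For a single $q$, I would use the Gaussian form \eqref{eq:rbfdist} with $f=\SW$ and bandwidth $\sigma_q$:
\[
  d_{k_{H_q}}^2 = 2\bigl(1-e^{-\SW/2\sigma_q^2}\bigr).
\]
The elementary inequality $1-e^{-t}\le t$ for $t\ge 0$ applies here, because $\SW\ge 0$ makes the exponent nonnegative. This gives
\[
  d_{k_{H_q}}^2\le 2\cdot\frac{\SW(D_n^{(q)},D_m^{(q)})}{2\sigma_q^2}=\frac{\SW(D_n^{(q)},D_m^{(q)})}{\sigma_q^2}.
\]
Next, the stability property (I2), $\SW\le 2\sqrt2\,d_1$, bounds this by $2\sqrt2\,d_1^{(q)}/\sigma_q^2$. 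Finally, $\sigma_q\ge\sigma_{\min}>0$ gives $1/\sigma_q^2\le 1/\sigma_{\min}^2$, which yields the uniform constant.

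None of these steps is deep. The only point needing care is to confirm that the hypotheses of (I2) hold for the restricted diagrams $D_n^{(q)}$. They do, because the restrictions are finite and bounded, so they lie in $\mathcal{D}^b_f$. Stability (I2) needs no cardinality bound, so no use of $N$ or $M$ enters here. The direction of the bandwidth comparison, where the minimum bandwidth gives the largest constant, is the other thing to check.

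As a side remark, the factor $2\sqrt2/\sigma_{\min}^2$ could be sharpened to the weighted form $\sum_q 2\sqrt2\,\lambda_q d_1^{(q)}/\sigma_q^2$. I would state the cruder bound as in the theorem.
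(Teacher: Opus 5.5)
Your proposal is correct and matches the paper's proof step for step: the decomposition of Lemma~\ref{lem:decomp}, the per-homology bound $2(1-e^{-t})\le 2t$ combined with (I2), and then $1/\sigma_q^2\le 1/\sigma_{\min}^2$. The sharper weighted bound in your side remark is exactly the intermediate inequality the paper writes before passing to $\sigma_{\min}$.
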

\begin{proof}
Fix a homology $q$ and write $\SW^{(q)}=\SW^{(q)}(D_n^{(q)},D_m^{(q)})$ and $d_1^{(q)}=d_1^{(q)}(D_n^{(q)},D_m^{(q)})$. We first bound the each $q$-homology term:
\begin{equation}
  d_{k_{H_q}}^2
  = 2\Bigl(1-e^{-\SW^{(q)}/2\sigma_q^2}\Bigr)
  \le 2\cdot\frac{\SW^{(q)}}{2\sigma_q^2} \le \frac{2\sqrt2}{\sigma_q^2}\,d_1^{(q)}.
  \label{eq:perdim-bound}
\end{equation}
Using Lemma~\ref{lem:decomp} and~\eqref{eq:perdim-bound}, we get
\[
  d_{k_\lambda}^2
  =\sum_q\lambda_q\,d_{k_{H_q}}^2
  \le\sum_q\lambda_q\,\frac{2\sqrt2}{\sigma_q^2}\,d_1^{(q)}
  \le\frac{2\sqrt2}{\sigma_{\min}^2}\sum_q\lambda_q\,d_1^{(q)}. \qedhere
\]
\end{proof}
\begin{theorem}[Discriminativity of $k_\lambda$]
\label{thm:disc}
For each homology $q$, let $T_q>0$ be such that $\SW^{(q)}/2\sigma_q^2\le T_q$. For $M=1+2N(2N-1)$ define
\[
  \kappa_q=\frac{1-e^{-T_q}}{2\sigma_q^2\,T_q\,M} > 0.
\]
Then
\[
  d_{k_\lambda}^2(D_n,D_m)\ \ge\ \Bigl(\min_q\kappa_q\Bigr)\sum_{q=0}^{Q}\lambda_q\,d_1^{(q)}\big(D_n^{(q)},D_m^{(q)}\big).
\]
\end{theorem}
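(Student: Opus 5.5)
The plan mirrors the proof of Theorem~\ref{thm:stab}: bound each $q$-homology term from below, then sum the bounds using Lemma~\ref{lem:decomp}.

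The only new ingredient is a lower bound on $1-e^{-t}$ valid on a bounded interval. The function $g(t)=1-e^{-t}$ is concave on $[0,\infty)$ with $g(0)=0$. Hence on $[0,T]$ its graph lies above the chord joining $(0,0)$ and $(T,1-e^{-T})$:
\[
  1-e^{-t}\ \ge\ \frac{1-e^{-T}}{T}\,t,\qquad 0\le t\le T.
\]
Equivalently, $g(t)/t$ is nonincreasing in $t$, so its value at $t$ is at least its value at $T$.

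Fix a homology $q$ and set $t_q=\SW^{(q)}/2\sigma_q^2$. By hypothesis $0\le t_q\le T_q$. Applying \eqref{eq:rbfdist} and the chord bound gives
\[
  d_{k_{H_q}}^2=2\bigl(1-e^{-t_q}\bigr)\ \ge\ 2\,\frac{1-e^{-T_q}}{T_q}\cdot\frac{\SW^{(q)}}{2\sigma_q^2}.
\]
We then invoke discriminativity (I3), $\SW^{(q)}\ge d_1^{(q)}/(2M)$. This requires that the homology-$q$ diagrams have at most $N$ points, so that $M=1+2N(2N-1)$ is the constant in the statement. It yields
\[
  d_{k_{H_q}}^2\ \ge\ \frac{1-e^{-T_q}}{2\sigma_q^2T_qM}\,d_1^{(q)}=\kappa_q\,d_1^{(q)}.
\]
Strictly, the constant obtained this way is $2\kappa_q$. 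The stated $\kappa_q$ is therefore a weaker, still valid bound, and the proof only needs the inequality $d_{k_{H_q}}^2\ge\kappa_q d_1^{(q)}$.

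Finally, Lemma~\ref{lem:decomp} together with $\lambda_q\ge0$ gives
\[
  d_{k_\lambda}^2=\sum_q\lambda_q d_{k_{H_q}}^2\ \ge\ \sum_q\lambda_q\kappa_q d_1^{(q)}\ \ge\ \bigl(\min_q\kappa_q\bigr)\sum_q\lambda_q d_1^{(q)}.
\]
Positivity $\kappa_q>0$ follows because $T_q>0$ implies $1-e^{-T_q}>0$.

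The main obstacle is the chord (concavity) step, together with stating clearly why the hypothesis $t_q\le T_q$ is needed. Since $1-e^{-t}$ saturates, no linear lower bound holds uniformly in $t$. This is why the constant depends on an a priori bound $T_q$, which in practice comes from bounded diagrams.
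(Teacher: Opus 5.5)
Your proof is correct and is the paper's argument step for step: the chord bound $1-e^{-t}\ge \frac{1-e^{-T_q}}{T_q}\,t$ from concavity on $[0,T_q]$, then (I3) (which, as you rightly note, implicitly requires the homology-$q$ diagrams to have at most $N$ points), then Lemma~\ref{lem:decomp} with $\lambda_q\ge 0$ to pass to $\min_q\kappa_q$. The one slip is your aside that the argument "strictly" yields $2\kappa_q$. In fact the constant is $2\cdot\frac{1-e^{-T_q}}{T_q}\cdot\frac{1}{2\sigma_q^2}\cdot\frac{1}{2M}=\kappa_q$ exactly, as your own display shows, so the stated $\kappa_q$ is precisely what the chord bound gives and not a weakened version.
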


\begin{proof}
Fix $q$, write $t=\SW^{(q)}/2\sigma_q^2\in[0,T_q]$. Note that
\[
  d_{k_{H_q}}^2 = 2\bigl(1-e^{-t}\bigr).
\]
Since $\phi(t)=1-e^{-t}$ is concave on $[0,T_q]$, we get
\begin{align}
  d_{k_{H_q}}^2
  &= 2\bigl(1-e^{-t}\bigr)
   \ge 2\,(1-e^{-T_q})\,\frac{t}{T_q}
   =\frac{1-e^{-T_q}}{\sigma_q^2\,T_q}\,\SW^{(q)} \nonumber\\
  &\ge \frac{1-e^{-T_q}}{\sigma_q^2\,T_q}\cdot\frac{1}{2M}\,d_1^{(q)}
   =\kappa_q\,d_1^{(q)}.
   \label{eq:kappa}
\end{align}
Using Lemma~\ref{lem:decomp} and~\eqref{eq:kappa}, we get
\[
  d_{k_\lambda}^2
  =\sum_q\lambda_q\,d_{k_{H_q}}^2
  \ge\sum_q\lambda_q\,\kappa_q\,d_1^{(q)}
  \ge\Bigl(\min_q\kappa_q\Bigr)\sum_q\lambda_q\,d_1^{(q)}. \qedhere
\]
\end{proof}
Theorems~\ref{thm:stab} and~\ref{thm:disc} show that $d_{k_\lambda}$ behaves like $d_1$ distance, so the kernel neither collapses distinct diagrams nor exaggerates small perturbations. This two-sided control is what makes clustering in the RKHS faithful to the underlying topological structure. See Algorithm~\ref{alg:main}.
\begin{algorithm}[H]
\caption{Sliced Wasserstein kernel $k$-means.}
\label{alg:main}
\begin{algorithmic}[1]
\Require Point clouds $\{X_1,\dots,X_N\}$; clusters $C$; max dimension $Q$;
weight grid $\Lambda \subset \Delta^Q$; number of SW directions $L$.
\Statex \textbf{Stage 1 --- Gram matrices.}
\For{$n = 1,\dots,N$}
  \State Compute persistence diagrams $D_n^{(0)},\dots,D_n^{(Q)}$ of $X_n$.
\EndFor
\For{$q = 0,\dots,Q$}
  \State $S^{(q)}_{nm} \gets \SW\bigl(D_n^{(q)}, D_m^{(q)}\bigr)$ for all
         $n < m$, approximated with $L$ directions.
  \State $\sigma_q \gets$ median of the off-diagonal entries of $S^{(q)}$.
  \State $K^{(q)} \gets \exp\bigl(-S^{(q)}/(2\sigma_q^2)\bigr)$, with
         $K^{(q)}_{nn} = 1$.
\EndFor
\Statex \textbf{Stage 2 --- weight selection over the grid.}
\For{each $\lambda \in \Lambda$}
  \State $K_\lambda \gets \sum_q \lambda_q K^{(q)}$.
  \State Run Algorithm~\ref{alg:kkm} on $K_\lambda$ to obtain
         $\hat\pi(\lambda)$.
  \State Score $\hat\pi(\lambda)$: ARI against ground truth when labels are
         available; otherwise the silhouette score under the
         RKHS metric \eqref{eq:dk} of $K_\lambda$.
\EndFor
\Statex \textbf{Stage 3 --- output.}
\State $\lambda^\star \gets$ the best-scoring weight vector.
\State \Return $\hat\pi(\lambda^\star)$ and $\lambda^\star$.
\end{algorithmic}
\end{algorithm}

\section{Experiments}
\label{sec:experiments}
We evaluate the performance of kernel $k$-means on three datasets, the four-class
signal dataset, circle--sphere--torus point-cloud dataset, and the 3D
shape-matching dataset~\cite{sumner2004deformation}. We also compare the convex combination against two natural alternatives. The first is the kernel on union of all homology groups and second is the SW kernel computed directly on point clouds: each cloud is
treated as a uniform discrete measure on its ambient space $\mathbb{R}^d$. We use the sliced Wasserstein distance between two such measures, sampling unit
directions uniformly on the sphere $S^{d-1}$. The convex combination is parameterised by a separate weight for each $q$-homology kernel. The next question we ask---whether the selected weights land on the homology which discriminates the classes. This question cannot be answered by the datasets used for clustering because there is no single discriminating homology. To answer this question, we generated the three datasets with a single known discriminating homology. We use the Adjusted Rand Index (ARI) to evaluate the performance of our clustering algorithm.
 
\subsection{Signal Dataset}
\label{subsec:signal}
We first evaluate our method on the synthetic signal dataset~\cite{marchese2017kmeans} of four types of signals. The four signals are given by the following equations:
\begin{align}
  w_i      &= \omega \sin(o_i) + \eta_i,                                 \label{eq:cls0}\\
  v_{i+1}  &= v_i + \eta_i,                                              \label{eq:cls1}\\
  u_{i+1}  &= \alpha \sin(u_i) + \eta_i,                                 \label{eq:cls2}\\
  z_i      &= \omega\bigl(1 + 0.5\cos o_i\bigr)\cos o_i + \eta_i,        \label{eq:cls3}
\end{align}
where $\omega \sim \mathcal{U}(1,3)$ and
$\alpha = 2.5$. These equations represent,
respectively, a noisy periodic signal~\eqref{eq:cls0}, a random walk~\eqref{eq:cls1}, a bistable time-series ~\eqref{eq:cls2}, and a doubly periodic signal~\eqref{eq:cls3}. Since the
amplitude $\omega$ is randomised in both periodic classes, the classes are separated by the geometry of their trajectories in delay space rather than amplitude. To expose this shape we reconstruct each signal's phase space via Takens'
delay embedding~\cite{takens1980}. Given a scalar time series
$\{x_i\}_{i=0}^{N-1}$, a delay $\tau \in \mathbb{R}^{+}$, and an embedding
dimension $d \in \mathbb{Z}^{+}$, the embedding maps each admissible index
$i$ to the vector
\begin{equation}
  \mathbf{y}_i
    = \bigl(x_i,\; x_{i+\tau},\; x_{i+2\tau},\;\dots,\;
            x_{i+(d-1)\tau}\bigr) \in \mathbb{R}^d ,
  \qquad i = 0,\dots,N-(d-1)\tau-1 .
\end{equation}
Takens' theorem~\cite{takens1980} states that, for a suitable embedding, this reconstruction is diffeomorphic to the
underlying dynamical system's attractor, so the topological properties of the original system are visible in the reconstructed point cloud.

Table~\ref{tab:signal-results} reports the performance of our method under five settings: The SW kernel computed directly on the embedded point clouds (with and without a random rotation applied to each cloud), the SW kernel on the union of homology groups $(H_0\cup H_1)$, the convex combination of $H_0$ and $H_1$, and the convex combination of the $H_0$, $H_1$ and SW kernel on rotated clouds. The cloud kernel alone clusters the unrotated
clouds essentially perfectly, since the four classes have distinct delay-space shapes; however, because the sliced
Wasserstein distance compares clouds along a fixed set of directions, it
is not rotation invariant, and a random rotation of each cloud reduces
the ARI. Kernel on union of $H_0$ and $H_1$ performs poorly compared to the convex combination of $H_0$ and $H_1$, which outperforms the baseline~\cite{marchese2017kmeans}. Supplementing it with the cloud kernel further reduces the mean error indicating that geometric and topological information
are complementary.

\begin{table}[H]
\centering
\begin{tabular}{lcc}
\toprule
Experimental setting & Mean ARI $\uparrow$ & Mean error $\downarrow$ \\
\midrule
SW kernel on cloud                          & 0.9999 & 0.0000 \\
SW kernel on randomly rotated cloud        & 0.6102 & 0.2832 \\
$H_0 \cup H_1$              & 0.4139 & 0.3852 \\
$\lambda_0H_0 + \lambda_1H_1$                       & \textbf{0.6052} & \textbf{0.2012} \\
$\lambda_0H_0 + \lambda_1H_1 + \lambda_2 \text {rotated cloud}$     & \textbf{0.7795} & \textbf{0.1103} \\
\midrule
$k$-Fr\'echet means baseline~\cite{marchese2017kmeans} & --- & 0.3183 \\
\bottomrule
\end{tabular}
\caption{Clustering performance on the four-class signal dataset.}
\label{tab:signal-results}
\end{table}

\subsection{Circles, Spheres, and Torus Point Clouds}
\label{subsec:cst}
We next evaluate on the synthetic point-cloud benchmark~\cite{cao2025kmeans}. We perturb each coordinate of every point by i.i.d.\ noise drawn uniformly from $s\cdot[-0.5,0.5]$, where the noise level $s$ ranges over $\{1,2,3,4,5,10\}$. Table~\ref{tab:cst-baselines} reports the mean ARI of our three settings---the SW kernel on the point clouds, the SW kernel on the union of homology groups $(H_0\cup H_1\cup H_2)$, and the convex combination of $q$-homology kernels alongside the persistence-based embeddings benchmarked in~\cite{cao2025kmeans}. We compare against the four representations benchmarked
in~\cite{cao2025kmeans}: persistence landscapes
(PL)~\cite{bubenik2015}, persistence images
(PI)~\cite{adams2017}, persistence measures
(PM)~\cite{divol2021} and persistence diagrams (PD) themselves. The cloud SW kernel is the most sensitive to noise, since coordinate-wise
perturbations directly distort the geometry. The
persistence-based kernels degrade far more slowly. The convex
combination improves on the kernel on the union of homology groups at every noise level (Figure~\ref{fig:cst-noise}).

\begin{table}[H]
\centering
\begin{tabular}{lccccccc}
\toprule
Noise $s$ & PL & PI & PD & PM & Cloud & $H_0\cup H_1\cup H_2$& $\lambda_0H_0+\lambda_1H_1+\lambda_2H_2$ \\
\midrule
1  & 0.998 & 0.892 & 0.971 & 0.996 & 0.978 & 0.994 & 0.996 \\
2  & 0.974 & 0.965 & 0.952 & 0.961 & 0.863 & 0.934 & 0.941 \\
3  & 0.911 & 0.922 & 0.892 & 0.947 & 0.757 & 0.922 & 0.928 \\
4  & 0.890 & 0.852 & 0.904 & 0.946 & 0.732 & 0.927 & 0.932 \\
5  & 0.843 & 0.734 & 0.890 & 0.945 & 0.733 & 0.917 & 0.931 \\
10 & 0.633 & 0.453 & 0.854 & 0.892 & 0.685 & 0.866 & 0.889 \\
\bottomrule
\end{tabular}
\caption{Mean ARI on synthetic circle, sphere, and torus dataset.}
\label{tab:cst-baselines}
\end{table}
\begin{figure}[H]
    \centering
    \includegraphics[width=0.62\textwidth]{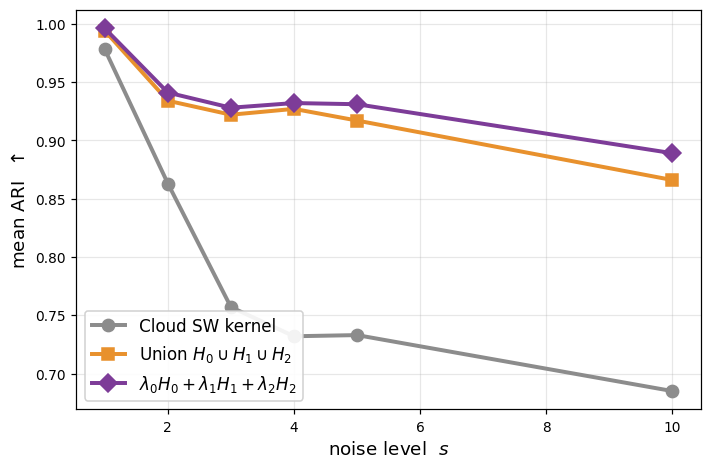}
    \caption{Mean ARI against noise level $s$ on the circle--sphere--torus dataset}
    \label{fig:cst-noise}
\end{figure}

\subsection{3D Shape Matching Data}
\label{subsec:shapedata}
We now turn to the benchmark dataset from 3D shape
matching~\cite{sumner2004deformation} used in~\cite{cao2025kmeans}, where each
mesh is reduced to the point cloud of its vertices and clustered through
persistent homology. The dataset has eight classes; camel, cat, elephant,
face, flamingo, head, horse, lion. Each a single object in several poses,
giving $83$ meshes in total. Table~\ref{tab:shape-results} reports ARI across the nine weights and six
cluster counts. Two patterns stand out. The pure $H_0$ kernel is essentially
uninformative. The
discriminative signal lives in the $H_2$. The pure $H_2$ kernel improves monotonically with $k$ and recovers the
eight classes exactly at $k=8$, where each class forms its own cluster (Figure~\ref{fig:shape-ari}). The weights suggest that the discriminating homology is $H_2$. The convex combination therefore works best when it concentrates on the homology that discriminates the classes, not when it combines the three kernels uniformly. We study this next.

\begin{table}[H]
\centering
\begin{tabular}{l cccccc}
\toprule
Weight $(\lambda_{H_0},\lambda_{H_1},\lambda_{H_2})$ & $k{=}3$ & $4$ & $5$ & $6$ & $7$ & $8$ \\
\midrule
$(1,0,0)$              & 0.018 & 0.016 & 0.015 & 0.014 & 0.013 & 0.014 \\
$(0,1,0)$              & 0.159 & 0.291 & 0.585 & 0.554 & 0.544 & 0.539 \\
$(0,0,1)$              & 0.322 & 0.404 & 0.593 & 0.754 & 0.846 & \textbf{1.000} \\
$(0.5,0.5,0)$ & 0.090 & 0.189 & 0.166 & 0.162 & 0.157 & 0.155 \\
$(0.5,0,0.5)$ & 0.188 & 0.321 & 0.478 & 0.544 & 0.469 & 0.653 \\
$(0,0.5,0.5)$ & 0.159 & 0.408 & 0.572 & 0.659 & 0.870 & 0.909 \\
$(0.3,0.3,0.4)$        & 0.159 & 0.408 & 0.572 & 0.754 & 0.643 & 0.835 \\
$(0.4,0.3,0.3)$        & 0.100 & 0.317 & 0.334 & 0.306 & 0.724 & 0.692 \\
$(0.3,0.4,0.3)$        & 0.159 & 0.408 & 0.568 & 0.655 & 0.870 & 0.869 \\

\bottomrule
\end{tabular}
\caption{ARI on the 3D shape-matching dataset.}
\label{tab:shape-results}
\end{table}
\begin{figure}[H]
\centering
\includegraphics[width=0.62\textwidth]{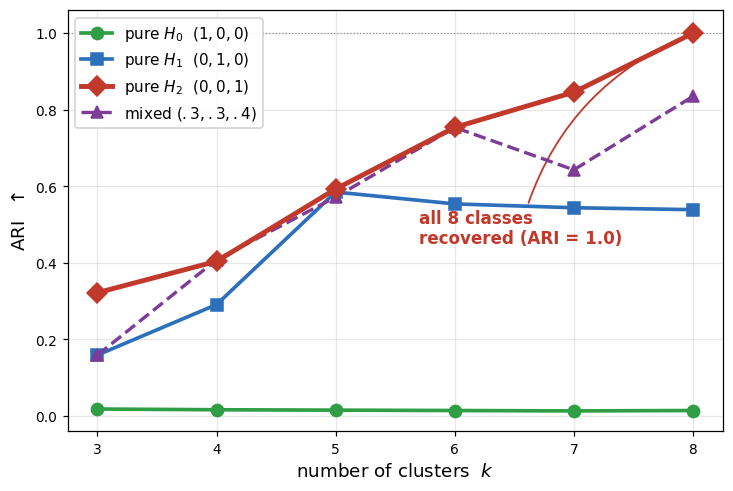}
\caption{ARI against the number of clusters $k$ on the 3D shape-matching
dataset.}
\label{fig:shape-ari}
\end{figure}
\subsection{The Weight-Significance Experiment}
\label{sec:weightsig}

The convex combination $k_\lambda$ uses a separate weight for each $q$-homology kernel, which raises a natural question: does the largest selected weight fall on the kernel corresponding to the homology which discriminates the classes? The benchmark datasets of
Sections~\ref{subsec:signal}--\ref{subsec:shapedata} cannot answer this question, because there is no single discriminating homology, so there is no reference against which a selected weight could be checked. Weight chosen by maximising clustering accuracy need not be the weight that
concentrates on the kernel carrying the class signal. We therefore build three two-class synthetic
datasets $\Szero,\Sone,\Stwo$ in which the separating signal is planted in a single chosen homology $q^\star\in\{0,1,2\}$ and
suppressed in the others. We select a weight triple by a
deterministic criterion --- centred kernel--target alignment of Cortes et al.~\cite{cortes2012alignment}, a centred refinement of  the kernel--target alignment of Cristianini et al.~\cite{cristianini2001alignment}, and
test whether the selected weight concentrates on $q^\star$.

\subsubsection{Datasets}
\label{sec:ws-datasets}

We build three two-class datasets $\mathcal{S}_{q^\star}$, $q^\star\in\{0,1,2\}$, each with $n$
point clouds per class. The two classes of $\mathcal{S}_{q^\star}$ are designed to be
topologically identical in every homology except $q^\star$, so that the classifier uses homology $q^\star$ information. 

For $\Szero$, where the signal is planted in $H_0$ (connected components),
class A is two well-separated Gaussian blobs and class B is three; they differ
only in the number of long-lived $H_0$ bars. For $\Sone$, planted signal is in $H_1$
(loops), class A is points on a single circle and class B is points on a
figure-eight; the two classes match in $H_0$ and $H_2$ and differ only in the
number of significant $H_1$ bars.

Planting the signal in $H_2$ (voids) for $\Stwo$ requires care, as the natural
first choices fail to isolate it. The first choice is sphere vs.\ filled ball. These does differ in $H_2$: the sphere encloses a void and the ball does not. But
they also differ in how the points are arranged. The sphere's points lie on a
surface whereas the ball's fill a three-dimensional region, so for the same
number of points the typical spacing between neighbours is not the same.
$H_0$ Persistent homology is sensitive to exactly this spacing, since
it records the scales at which nearby points first join up. The $H_0$ diagrams
of the sphere and the ball already differ even though both clouds are a single
connected piece ($\beta_0=1$); a classifier could therefore separate the classes
using $H_0$ rather than the $H_2$. The second choice, sphere vs.\ torus, fails for
a different reason: the torus has two independent loops ($\beta_1=2$), so $H_1$
discriminates as well. We
instead use sphere $S^2$ vs.\ a punctured sphere, a full $2$-sphere with a
single spherical cap removed. Topologically the latter is a disk,
$\beta=(1,0,0)$, so the void is destroyed. It is however sampled from the same
surface as the full sphere, so the $H_0$ and $H_1$ diagrams are matched as
closely as the construction allows. Figure~\ref{fig:planted-clouds} shows one sample cloud per class, and
Table~\ref{tab:datasets} summarises the three constructions.
\begin{figure}[H]
\centering
\includegraphics[width=\textwidth]{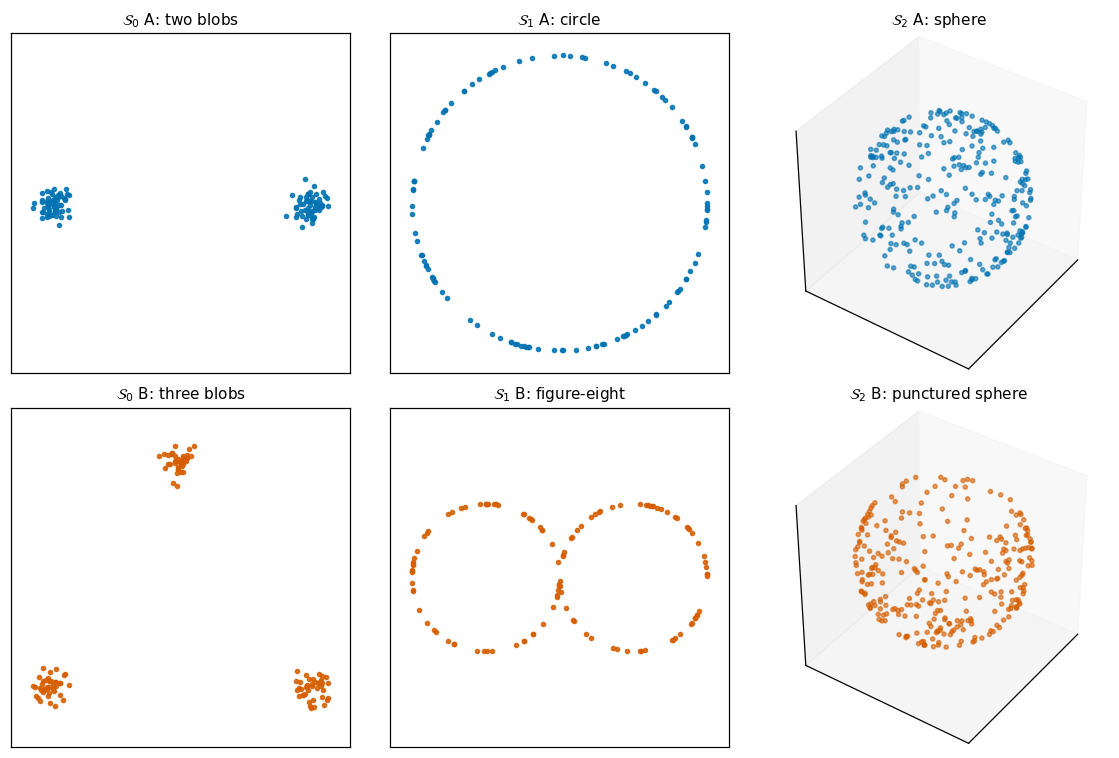}
\caption{One sample cloud per class of the three planted datasets.}
\label{fig:planted-clouds}
\end{figure}
\begin{figure}[H]
\centering
\includegraphics[width=\textwidth]{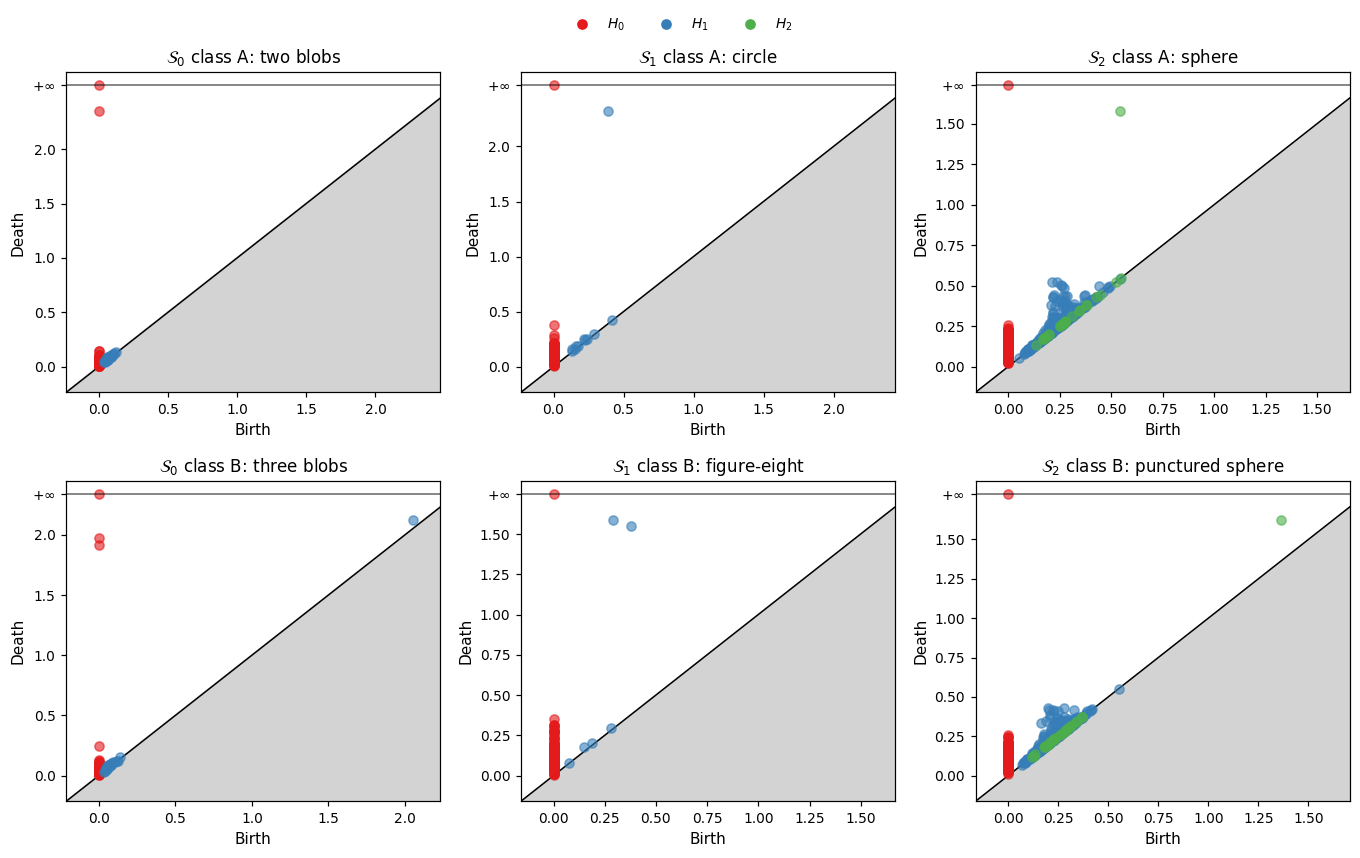}
\caption{Persistence diagrams of the clouds in
Figure~\ref{fig:planted-clouds}.}
\label{fig:planted-diagrams}
\end{figure}

\begin{table}[H]
\centering
\begin{tabular}{l l ccc}
\toprule
Dataset & Classes (A vs.\ B) & $\beta_0$ & $\beta_1$ & $\beta_2$ \\
\midrule
$\Szero$ & two blobs vs.\ three blobs        & \fbox{$2$ vs.\ $3$} & $0$ & $0$ \\
$\Sone$  & circle vs.\ figure-eight          & $1$ & \fbox{$1$ vs.\ $2$} & $0$ \\
$\Stwo$  & sphere vs.\ punctured sphere      & $1$ & $0$ & \fbox{$1$ vs.\ $0$} \\
\bottomrule
\end{tabular}
\caption{Planted homology (boxed)
differs between A and B.}
\label{tab:datasets}
\end{table}

\subsubsection{Preprocessing}
\label{sec:ws-preprocessing}

Three preprocessing choices ensure the discriminative signal lives in the
topology rather than in raw geometry. Each cloud is first divided by a single
scalar, its global standard deviation, which rescales the cloud uniformly and
preserves its shape. Each coordinate is then perturbed by
uniform noise $[-s/2,s/2]$. Figure~\ref{fig:noise-progression} shows the effect of increasing $s$ on a
figure-eight cloud. We normalise first and add noise
second. The three datasets start at very different sizes, the
clusters of $\Szero$ sit tens of units apart, while the circle and sphere of
$\Sone,\Stwo$ have radius about one. Adding the noise first and resizing
afterwards would shrink the noise along with everything else. Finally, after
normalisation, each cloud is randomly rotated, so that the classes can be
separated only by their topology and not by a fixed orientation. For each cloud we compute persistence diagrams $D_q$ for the homology groups $H_0,H_1,H_2$. For the planar families $\Szero,\Sone$ we use the Vietoris--Rips
filtration, whose persistent homology approximates that of the
underlying ball union up to a Čech--Rips interleaving~\cite{edelsbrunner2010computational,chazal2014persistence}. For $\Stwo$ in $\R^3$ we use the alpha complex, which by the Nerve Theorem is homotopy equivalent to the union of balls at every scale~\cite{edelsbrunner2010computational}, so its persistence diagram is exactly that of the ball union. It is also far cheaper than Vietoris--Rips once $H_2$ is required. 

In every cloud we drop the one essential ($\infty$-death) feature
(the $+\infty$ line in Figure~\ref{fig:planted-diagrams}). It carries no class information, since every nonempty cloud has exactly one such feature. The natural fix, capping the death at the largest edge length in the filtration does not help, because that largest length measures the overall extent of the cloud, which differs between the classes. The capped bar would then encode the cloud's size and turn into a discriminative feature of its own. Lastly, we keep only the few most persistent points in each homology (here the top three). This denoising step is justified by the stability theorem~\cite{cohensteiner2005stability}: perturbing a cloud by $\varepsilon$ moves its diagram by
$O(\varepsilon)$, so noise can only produce low-persistence points near the diagonal, while a
genuinely persistent feature stays far from it, as visible in
Figure~\ref{fig:planted-diagrams}. The planted feature is always among the most persistent points, so keeping the high-persistence points keeps the signal and discards the noisy points. The number of points kept must be at least the number of planted features .
\begin{figure}[H]
\centering
\includegraphics[width=0.8\textwidth]{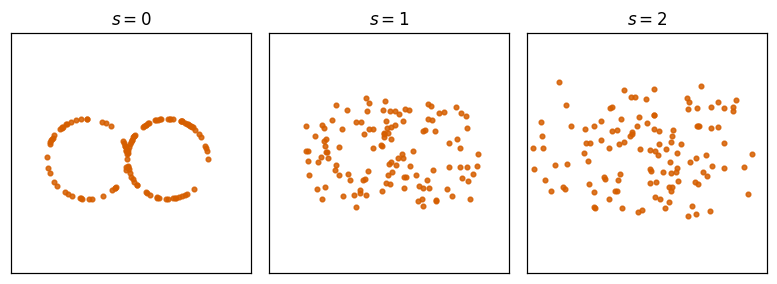}
\caption{A figure-eight cloud from $\Sone$ at noise levels $s = 0, 1, 2$.}
\label{fig:noise-progression}
\end{figure}

\subsubsection{Weight Selection by Centered Kernel--Target Alignment}
\label{sec:ws-alignment}
The weight-selection criterion must satisfy two requirements. First, it must
be deterministic. Second,
it must not be fooled by degenerate kernels.
 
A natural deterministic criterion is the kernel--target alignment of
Cristianini et al.~\cite{cristianini2001alignment}. Let $y$ be the class
labels and $Y_{ij}=\mathds{1}[y_i=y_j]$ the ideal kernel; the (uncentred)
alignment of a kernel matrix $K$ with $Y$ is
\[
  \widehat{A}(K,Y)\;=\;\frac{\inner{K}{Y}_F}{\Fnorm{K}\,\Fnorm{Y}},
\]
which is large when same-class pairs are more similar than different-class
pairs. In our setting, however, the uncentered alignment fails, and it fails
on exactly the degenerate kernels this experiment produces. An empty persistence diagram gives a constant kernel: in $\Szero$ the clouds are
planar, so they enclose no voids and their $H_2$ diagrams are empty; the SW
distance between two empty diagrams is zero, so every entry of the $H_2$
Gram matrix is $\exp(0)=1$, the all-ones matrix $J$. With class sizes
$n_A$ and $n_B$, $n=n_A+n_B$, the entries of $Y$ equal $1$ exactly on
same-class pairs, so $\inner{J}{Y}_F = n_A^2+n_B^2$, $\Fnorm{J}=n$ and
$\Fnorm{Y}=\sqrt{n_A^2+n_B^2}$. Hence
\[
  \widehat{A}(J,Y)\;=\;\frac{\sqrt{n_A^2+n_B^2}}{n}\;\ge\;\frac{1}{\sqrt2},
\]
The uncentred criterion therefore awards a score of at least $0.71$ to a kernel containing no information, and could prefer the empty $H_2$ over
the planted $H_0$. This is an instance of a general phenomenon identified by Cortes et al.~\cite{cortes2012alignment}, who show that uncentered alignment need not correlate with performance and can even correlate negatively. Cortes et al.~\cite{cortes2012alignment} propose centering in feature space as the remedy. We use the
centered kernel--target alignment
\begin{equation}
  A(K,Y)\;=\;\frac{\inner{K_c}{Y_c}_F}{\Fnorm{K_c}\,\Fnorm{Y_c}},
  \qquad K_c = HKH,\quad H = I-\tfrac{1}{n}\mathbf{1}\mathbf{1}^{\!\top}.
  \label{eq:align}
\end{equation}
Centering annihilates the constant part of a kernel. For $J$, we have, $K_c = HJH = 0$.
 
Because $k_\lambda$ is linear in $\lambda$ and centring is linear, with
$K_{c,q}=HK_{H_q}H$ we have
\begin{equation}
  A(k_\lambda,Y)=\frac{\lambda^{\!\top} b}
  {\sqrt{\lambda^{\!\top} G\,\lambda}\;\Fnorm{Y_c}},
  \qquad b_q=\inner{K_{c,q}}{Y_c},\quad
  G_{qj}=\inner{K_{c,q}}{K_{c,j}},
  \quad q,j\in\{0,1,2\},
  \label{eq:bG}
\end{equation}

We precompute $b\in\R^3$ and $G\in\R^{3\times3}$ once per trial, so
evaluating the alignment at any $\lambda$ costs only a dot product and a
quadratic form. The selected weight is
$\lamstar=\argmax_{\lambda\in\Delta^2}A(k_\lambda,Y)$, where the
maximisation is over a fine grid on the simplex $\Delta^2$.

\subsubsection{Results}
\label{sec:ws-results}

For each dataset and noise level $s$ we run $10$ trials, with 10 clouds per class
and 120 points per cloud for $\Szero$ and $\Sone$, and 300 for $\Stwo$, over the noise
grid of Section~\ref{sec:ws-datasets}. In each trial we record the selected weight
$\lamstar$ and the $q$-homology centered alignment $A(K_{H_q},Y)$, and we summarise
the runs by three quantities.

\begin{description}
  \item[(a) Hit rate.] The fraction of the $T$ trials in which the largest selected
  weight falls on the planted homology. Near 1 means the method identifies the correct homology; a uniformly random choice among the three kernels would achieve $1/3$.
  \item[(b) Weight confusion.] For each planted homology $q^\star$ we average the
  selected weight over the $T$ trials and then over the noise levels $s$, giving
  $\bar\lamstar_q(q^\star)$ and the entry $W_{q^\star q}$ of a $3\times3$ matrix $W$.
  Row $q^\star$ indexes the planted homology and column $q$ the homology receiving the weight,
  so $W_{q^\star q}$ is the mean weight placed on $q$-homology kernel when the signal is
  planted in $H_{q^\star}$. We want $W$ strictly diagonally dominant, since correct weight selection means most weight lands on the homology in which the signal is planted.
  \item[(c) Consistency.] For each dataset we check that three homologies coincide:
  the one whose kernel $K_{H_q}$, attains the highest mean alignment;
  the one receiving the largest mean selected weight; and the planted one:
  $\argmax_q \bar A(K_{H_q},Y)=\argmax_q\bar\lamstar_q=q^\star$.
\end{description}
Tables~\ref{tab:S0}--\ref{tab:S2} give the per-noise weights, hit rates, and
single-kernel alignments for the three datasets; the planted homology is
highlighted. Averaging over the noise sweep gives the two $3\times3$ matrices in
Table~\ref{tab:confusion}, with rows indexed by the planted homology and columns
by the homology receiving the weight. Both are strongly diagonally dominant: the
weight-confusion matrix has average diagonal mass $0.95$ and the alignment matrix
$0.85$. The selected weight lands on the planted homology with hit rate $1.00$ up
to moderate noise, the weight-confusion matrix is nearly the identity, and for
every dataset the homology carrying the most weight is also the one with the
highest mean alignment, so criterion (c) holds and the convex combination
identifies the discriminating homology.

\begin{table}[H]
\centering
\begin{tabular}{c ccc c ccc}
\toprule
& \multicolumn{3}{c}{selected weight $\lamstar$} & & \multicolumn{3}{c}{alignment $A(K_{H_q},Y)$}\\
\cmidrule(lr){2-4}\cmidrule(lr){6-8}
$s$ & \hgl{$H_0$} & $H_1$ & $H_2$ & hit & \hgl{$H_0$} & $H_1$ & $H_2$\\
\midrule
$0$    & \hgl{0.865} & 0.000 & 0.135 & 1.00 & \hgl{1.000} & 0.229 & 0.067\\
$0.25$ & \hgl{0.952} & 0.000 & 0.048 & 1.00 & \hgl{1.000} & 0.229 & 0.081\\
$0.5$  & \hgl{1.000} & 0.000 & 0.000 & 1.00 & \hgl{0.998} & 0.244 & 0.087\\
$0.75$ & \hgl{1.000} & 0.000 & 0.000 & 1.00 & \hgl{0.996} & 0.282 & 0.077\\
$1.0$  & \hgl{1.000} & 0.000 & 0.000 & 1.00 & \hgl{0.991} & 0.322 & 0.070\\
$1.5$  & \hgl{1.000} & 0.000 & 0.000 & 1.00 & \hgl{0.966} & 0.396 & 0.069\\
$2.0$  & \hgl{1.000} & 0.000 & 0.000 & 1.00 & \hgl{0.836} & 0.469 & 0.076\\
\midrule
mean   & \hgl{0.97} & 0.00 & 0.03 & 1.00 & \hgl{0.97} & 0.31 & 0.08\\
\bottomrule
\end{tabular}
\caption{$\Szero$ (planted $H_0$). The $H_0$ column is highlighted.}
\label{tab:S0}
\end{table}

\begin{table}[H]
\centering
\begin{tabular}{c ccc c ccc}
\toprule
& \multicolumn{3}{c}{selected weight $\lamstar$} & & \multicolumn{3}{c}{alignment $A(K_{H_q},Y)$}\\
\cmidrule(lr){2-4}\cmidrule(lr){6-8}
$s$ & $H_0$ & \hgl{$H_1$} & $H_2$ & hit & $H_0$ & \hgl{$H_1$} & $H_2$\\
\midrule
$0$    & 0.000 & \hgl{1.000} & 0.000 & 1.00 & 0.329 & \hgl{0.977} & 0.652\\
$0.25$ & 0.000 & \hgl{1.000} & 0.000 & 1.00 & 0.365 & \hgl{0.978} & 0.422\\
$0.5$  & 0.000 & \hgl{1.000} & 0.000 & 1.00 & 0.366 & \hgl{0.959} & 0.288\\
$0.75$ & 0.000 & \hgl{1.000} & 0.000 & 1.00 & 0.272 & \hgl{0.938} & 0.257\\
$1.0$  & 0.000 & \hgl{1.000} & 0.000 & 1.00 & 0.319 & \hgl{0.868} & 0.158\\
$1.5$  & 0.005 & \hgl{0.995} & 0.000 & 1.00 & 0.250 & \hgl{0.503} & 0.070\\
$2.0$  & 0.315 & \hgl{0.365} & 0.320 & 0.30 & 0.259 & \hgl{0.234} & 0.124\\
\midrule
mean   & 0.05 & \hgl{0.91} & 0.05 & 0.90 & 0.31 & \hgl{0.78} & 0.28\\
\bottomrule
\end{tabular}
\caption{$\Sone$ (planted $H_1$). The $H_1$ column is highlighted.}
\label{tab:S1}
\end{table}

\begin{table}[H]
\centering
\begin{tabular}{c ccc c ccc}
\toprule
& \multicolumn{3}{c}{selected weight $\lamstar$} & & \multicolumn{3}{c}{alignment $A(K_{H_q},Y)$}\\
\cmidrule(lr){2-4}\cmidrule(lr){6-8}
$s$ & $H_0$ & $H_1$ & \hgl{$H_2$} & hit & $H_0$ & $H_1$ & \hgl{$H_2$}\\
\midrule
$0$    & 0.000 & 0.000 & \hgl{1.000} & 1.00 & 0.266 & 0.317 & \hgl{0.993}\\
$0.25$ & 0.000 & 0.000 & \hgl{1.000} & 1.00 & 0.249 & 0.317 & \hgl{0.984}\\
$0.5$  & 0.000 & 0.000 & \hgl{1.000} & 1.00 & 0.235 & 0.311 & \hgl{0.982}\\
$0.75$ & 0.000 & 0.000 & \hgl{1.000} & 1.00 & 0.230 & 0.313 & \hgl{0.945}\\
$1.0$  & 0.000 & 0.000 & \hgl{1.000} & 1.00 & 0.229 & 0.302 & \hgl{0.860}\\
$1.5$  & 0.000 & 0.000 & \hgl{1.000} & 1.00 & 0.199 & 0.283 & \hgl{0.492}\\
$2.0$  & 0.065 & 0.170 & \hgl{0.765} & 0.80 & 0.215 & 0.237 & \hgl{0.263}\\
\midrule
mean   & 0.01 & 0.02 & \hgl{0.97} & 0.97 & 0.23 & 0.30 & \hgl{0.79}\\
\bottomrule
\end{tabular}
\caption{$\Stwo$ (planted $H_2$). The $H_2$ column is highlighted.}
\label{tab:S2}
\end{table}

\begin{table}[H]
\centering
\begin{minipage}[t]{0.48\textwidth}\centering
\begin{tabular}{l ccc}
\toprule
$W$ & $\to H_0$ & $\to H_1$ & $\to H_2$\\
\midrule
planted $H_0$ & \hgl{0.97} & 0.00 & 0.03\\
planted $H_1$ & 0.05 & \hgl{0.91} & 0.05\\
planted $H_2$ & 0.01 & 0.02 & \hgl{0.97}\\
\midrule
\multicolumn{4}{l}{diagonal mass $=0.95$ \;}\\
\bottomrule
\end{tabular}
\end{minipage}\hfill
\begin{minipage}[t]{0.48\textwidth}\centering
\begin{tabular}{l ccc}
\toprule
align & $\to H_0$ & $\to H_1$ & $\to H_2$\\
\midrule
planted $H_0$ & \hgl{0.97} & 0.31 & 0.08\\
planted $H_1$ & 0.31 & \hgl{0.78} & 0.28\\
planted $H_2$ & 0.23 & 0.30 & \hgl{0.79}\\
\midrule
\multicolumn{4}{l}{diagonal mass $=0.85$ \;}\\
\bottomrule
\end{tabular}
\end{minipage}
\caption{Left: weight-confusion matrix $W$ (mean selected weight). Right: centred alignment matrix for each $H_q$.}
\label{tab:confusion}
\end{table}

\section{Conclusions}
\label{sec:conclusion}

In this paper, we studied kernel $k$-means for topology-based clustering with sliced Wasserstein
kernels, and how to combine topological information across homology groups. The convex combination of SW kernels is
positive definite, inherits the stability and discriminativity of its components
(Theorems~\ref{thm:stab} and~\ref{thm:disc}), and keeps a separate bandwidth for each $q$-homology kernel. On the benchmark of~\cite{marchese2017kmeans} it improves on the
baseline, and on the synthetic circle--sphere--torus dataset~\cite{cao2025kmeans} it
is competitive with the strongest baseline. On the 3D shape-matching data~\cite{sumner2004deformation} it separates all eight classes by concentrating
weight on the $H_2$ kernel. On all three benchmarks it outperforms the kernel on the union of all homology groups and, under perturbation, the SW kernel applied
directly to point clouds. Finally, on three synthetic datasets with a topological
signal planted in a single known homology group, the weight selected by
maximising centred kernel--target alignment lands on the planted homology group with
hit rate $1.0$ up to moderate noise, giving a weight-confusion matrix with mean
diagonal entry $0.95$.

In the clustering experiments we selected the weights using the Adjusted Rand Index, which compares against the true labels, so the weight selection is supervised even though the clustering is not. Without labels, the same search can use a measure such as the silhouette score, which relies only on the 
kernel-induced distances. In the weight-significance
experiments we kept only the few most persistent points per homology group (here the
top three) as a denoising step. This works because each class here was
built to have just one or two real features, so the planted signal is always
among the longest-lived and is never discarded. But this cut-off is a
convenience tied to such datasets, not a general rule. The datasets where the discriminating
signal is instead carried by many small, short-lived features, keeping only the
longest few would throw away exactly the information that matters.

\bibliographystyle{plain}
\bibliography{references}

\end{document}